\definecolor{marin}{rgb}   {0.,   0.3,   0.7}
\definecolor{rouge}{rgb}   {0.8,   0.,   0.}
\definecolor{sepia}{rgb}   {0.8,   0.5,   0.}
\newcommand{\R}{\mathbb{R}}
\newcommand{\C}{\mathbb{C}}
\newcommand{\T}{\mathbb{T}}
\newcommand{\e}{\ensuremath{\mathrm{e}}}
\newcommand{\cO}{{\mathcal O}}
\newcommand{\Arg}{\mathrm{Arg}}
\newtheorem{lemma}{Lemma}[section]
\newtheorem{theorem}[lemma]{Theorem}
\newtheorem{remark}[lemma]{Remark}
\numberwithin{equation}{section}
\newcommand{\QED}{\mbox{}\hfill \raisebox{-0.2pt}{\rule{5.6pt}{6pt}\rule{0pt}{0pt}}
          \medskip\par}
\newenvironment{proof}{\noindent{\bf Proof:}}{$\Box$}
\title{Optimized high-order splitting methods for some classes of parabolic equations}
\author{S. Blanes$^{1}$, F. Casas$^{2}$, P. Chartier$^{3}$ and A. Murua$^{4}$ \\[2ex]
$^{1}$ {\small\it Instituto de Matem\'atica Multidisciplinar, Universitat Polit\`ecnica de Val\`encia,  46022 Valencia, Spain}\\{
\small\it email: serblaza@imm.upv.es}\\[1ex]
$^{2}$ {\small\it Departament de Matem\`atiques and IMAC,
Universitat Jaume I, 12071 Castell\'on, Spain}\\{
\small\it email: Fernando.Casas@mat.uji.es}\\[1ex]
$^{3}$ {\small\it  INRIA Rennes and Ecole Normale Sup\'erieure de Cachan, Antenne de Bretagne,} \\
{\small\it Avenue Robert Schumann, 35170 Bruz, France.}\\
{\small\it email:  Philippe.Chartier@inria.fr}\\[1ex]
$^{4}$ {\small\it  EHU/UPV, Konputazio Zientziak eta A.A. saila, Informatika Fakultatea, 12071 Donostia/San Sebasti\'an, Spain}\\{
\small\it email: Ander.Murua@ehu.es}
}
\begin{document}
\mathsurround 0.8mm

\maketitle

\begin{abstract}
We are concerned with the numerical solution obtained by {\em
splitting methods}  of certain parabolic partial differential
equations. Splitting schemes of order higher than two with real
coefficients necessarily involve negative coefficients. It has
been demonstrated that this  second-order barrier can be overcome
by using splitting methods with {\em complex-valued}
coefficients (with positive real parts). In this way, methods of orders $3$ to $14$ by using
the Suzuki--Yoshida triple (and quadruple) jump composition
procedure have been explicitly built. Here we reconsider this technique and show
that it is inherently bounded to order $14$ and clearly sub-optimal
with respect to error constants. As an alternative, we solve
directly the algebraic equations arising from the order conditions
and construct methods of orders $6$ and $8$ that are the most
accurate ones available at present time, even when low accuracies
are desired. We also show that, in the general case, 14 is not an order barrier for splitting methods with complex coefficients with positive real part by
building explicitly a method of order $16$ as a composition of
methods of order 8.
\\[1ex]
{\bf Keywords}: composition methods, splitting methods, complex coefficients, parabolic evolution equations.
\\[1ex]
{\bf MSC numbers}: 65L05, 65P10, 37M15
\end{abstract}

\section{Introduction}
In this paper, we consider linear evolution equations
 of the form
\begin{eqnarray} \label{eq:para}
\frac{d u}{dt}(t)= A u(t) + Bu(t), \qquad u(0)=u_0,
\end{eqnarray}
where the (possibly unbounded) operators $A$, $B$ and $A+B$ generate $C^0$ semi-groups for positive $t$ over a finite or infinite Banach space $X$. Equations of this form are encountered in the context of {\em parabolic} partial differential equations, a prominent example being the inhomogeneous {\em heat equation}
\begin{eqnarray*}
\frac{\partial u}{\partial t}(x,t)= \Delta u(x,t) + V(x) u(x,t),
\end{eqnarray*}
where $t\geq 0$, $x \in \R^d$ or $x \in \T^d$ and $\Delta$ denotes the Laplacian in $x$.

A method of choice for solving numerically (\ref{eq:para}) consists in advancing the solution alternatively along  the exact (or numerical) solutions of the two problems
\begin{eqnarray*}
\frac{d u}{dt}(t)= A u(t) \qquad \mbox{ and } \qquad \frac{d u}{dt}(t)= B u(t).
\end{eqnarray*}
Upon using an appropriate sequence of steps, high-order approximations can be obtained -for instance with exact flows- as
\begin{eqnarray} \label{eq:splittingmethod}
\Psi(h) = \e^{h b_0 B} \, \e^{h a_1 A} \, \e^{h b_1 B} \, \cdots \, \e^{h a_s A} \, \e^{h b_s B}.
\end{eqnarray}
The simplest example within this class
is the {\em Lie-Trotter splitting}
\begin{equation}
\e^{h A} \, \e^{h B} \qquad \mbox{ or } \qquad \e^{h B} \, \e^{h A},
\end{equation}
which is a first order approximation to the solution of (\ref{eq:para}), while the {\em symmetrized} version
\begin{equation} \label{eq:strang2}
S(h)= \e^{h/2\, A} \, \e^{h B} \, \e^{h/2\, A} \qquad \mbox{ or } \qquad S(h)= \e^{h/2\, B} \, \e^{h A} \, \e^{h/2\, B}
\end{equation}
is referred to as {\em Strang splitting} and is an approximation of order $2$.

The application of splitting methods to evolutionary partial differential equations of parabolic or
mixed hyperbolic-parabolic type constitutes a very active
field of research.  For this class of problems it makes sense to split the spatial differential operator, each
part corresponding to a different physical contribution (e.g., reaction and diffusion). Although  the
formal analysis of splitting methods in this setting can be carried out by power series
 expansions (as in the case
of ordinary differential equations), several fundamental difficulties arise, however, when establishing
convergence and rigorous error bounds for unbounded operators \cite{holden10smf}. Partial results exist for hyperbolic problems \cite{tang95ebf,tang98caf,holden10smf}, parabolic problems
\cite{descombes02sff,hundsdorfer03nso}
and for the Schr\"odinger equation \cite{jahnke00ebf,lubich08osm}, even for high order
splitting methods \cite{thalhammer08hoe}.

In \cite{hansen08esf}, it has been established that, under the  two conditions stated below, a splitting method of the form (\ref{eq:splittingmethod})  is of order $p$ for problem (\ref{eq:para}) if and only if it is of order $p$ for {\em ordinary differential equations} in finite dimension. In other words, if and only if  the difference $\Psi(h)-\e^{h(A+B)}$  admits a formal expansion of the form
\begin{eqnarray}
\label{eq:formalLE}
  \Psi(h)- \e^{h(A+B)} = h^{p+1} E_{p+1} + h^{p+2} E_{p+2} + \cdots
\end{eqnarray}

The two referred conditions write (see \cite{hansen08esf} for a complete exposition):
\begin{enumerate}
\item {\em Semi-group property}: $A$, $B$ and $A+B$ generate $C^0$ semi-groups on $X$ and, for all positive $t$,
\begin{eqnarray*}
\|\e^{t A} \| \leq \e^{\omega_A t},  \qquad \|\e^{t B} \| \le \e^{\omega_B t} \quad \mbox{ and } \quad
\|\e^{t (A+B)} \| \le \e^{\omega t}
\end{eqnarray*}
for some positive constants $\omega_A$, $\omega_B$ and $\omega$.
\item {\em Smoothness property}: For any pair of multi-indices $(i_1,\ldots,i_m)$ and $(j_1,\ldots,j_m)$ with $i_1+ \cdots + i_m + j_1 + \cdots + j_m = p+1$, and for all $t \in [0,T]$,
\begin{eqnarray*}
\|A^{i_1} B^{j_1} \ldots A^{i_m} B^{j_m} \, \e^{t(A+B)}u_0\| \leq C
\end{eqnarray*}
for a positive constant $C$.
\end{enumerate}
However, designing high-order splitting methods for (\ref{eq:para}) is not as straightforward as it might seem at first glance. As a matter of fact, the operators $A$ and $B$ are only assumed to generate $C^0$ semi-groups  (and not groups). This means in particular that the flows $\e^{t A}$ and/or $\e^{t B}$ may not be defined for negative times (this is indeed the case, for instance, for
the Laplacian operator) and this prevents the use of methods which embed negative coefficients.  Given that splitting methods with real coefficients must have some of their coefficients $a_i$ and $b_i$ negative\footnote{The existence of at least one negative coefficient was shown in \cite{sheng89slp, suzuki91gto}, and the existence of a negative coefficient for both operators was proved in \cite{goldman96nos}. An elementary proof can be found in \cite{blanes05otn}. }
to achieve
order $3$ or more, this seems to indicate, as it has been believed for a long time within the numerical analysis community, that
it is only possible to apply exponential splitting methods of at most order $p = 2$. In order to circumvent this order-barrier, the papers \cite{hansen09hos} and \cite{castella09smw} simultaneously introduced {\em complex-valued} coefficients\footnote{Methods with complex-values coefficients have also been used in a similar context \cite{rosenbrock62sgi} or in celestial mechanics \cite{chambers03siw}.} with positive real parts. It can indeed be checked in many situations that the propagators $\e^{z A}$ and $\e^{z B}$ are still well-defined in a reasonable distribution sense for $z \in \C$, provided  that $\Re(z) \geq 0$.  Using this extension from the real line to the complex plane, the authors of  \cite{hansen09hos} and \cite{castella09smw} built up methods of orders $3$ to $14$ by considering a technique known as {\em triple-jump composition}\footnote{And its generalization to  {\em quadruple-jump}.} and  made popular by a series of  authors: Creutz \& Gocksch \cite{creutz89hhm}, Forest \cite{forest89cia}, Suzuki \cite{suzuki90fdo} and Yoshida \cite{yoshida90coh}. \\

In this work, we continue the search for new methods with complex coefficients with positive real parts.
Eventually, our objective is to show that, compared to the methods built in
\cite{hansen09hos} and \cite{castella09smw} by applying
the triple-jump (or quadruple-jump) procedure,  it is possible to
construct more efficient methods and also of higher order by solving directly the polynomial equations arising from the order conditions.
In particular, we construct methods of order $6$ and $8$ with minimal local error constants among the methods with minimal number of stages. We also construct a method of order $16$, obtained as a composition based on an appropriate 8th order splitting method.


Here we are particularly interested in obtaining new splitting methods for reaction-diffusion equations.
These constitute mathematical models that describe how the population of one or several species distributed in space evolves under the action of two concurrent phenomena: {\em reaction} between species in which predators eat preys and {\em diffusion}, which makes the species to spread out in space\footnote{Apart from biology and ecology, systems of this sort also appear in chemistry
(hence the term reaction), geology and physics.}. From a mathematical point of view,
they belong to the class of semi-linear parabolic partial differential equations
and can be represented in the general form
\begin{eqnarray*}
 \frac{\partial u}{\partial t}  = D \Delta u + F(u),
\end{eqnarray*}
where each component of the vector $u(x,t)\in\R^d$ represents the population of one species,
$D$ is the real diagonal matrix of diffusion coefficients and $F$ accounts for all local
interactions between species.\footnote{The choice $F(u) = u(1-u)$ yields Fisher's equation and is used to describe the spreading of biological populations; the choice $F(u) = u(1-u^2)$ describes Rayleigh--Benard convection;
the choice $F(u) = u(1-u)(u-\alpha)$ with  $0<\alpha<1$ arises in combustion theory and is referred to as Zeldovich equation.} Strictly speaking, the theoretical framework introduced in \cite{hansen09hos} does not cover this situation if $F$ is nonlinear, so that (apart from section \ref{sect:nt}, where we successfully integrate numerically an example with nonlinear $F$) we will think of $F$ as being linear. The important feature of $A=D \Delta$ here is that it has a  {\bf real} spectrum: hence, any splitting method involving complex steps with positive real part is suitable for that class of problems. In principle, one could even consider splitting methods with $a_i$'s having positive real part and unconstrained complex $b_i$'s.

It may be worth mentioning that the size of the arguments of the $a_i$ coefficients of the splitting method
is a critical factor when the diffusion operator involves
a complex number, for instance, an equation of the form
\begin{equation} \label{eq:landau}
\frac{\partial u}{\partial t} = \delta \Delta u + F(u),
\end{equation}
where $\delta$ is a complex number with $\Re(\delta) > 0$. The choice $F(u)=\mu_3 u^3 + \mu_2 u^2 + \mu_1 u + \mu_0$ is known as the cubic Ginzburg--Landau equation \cite{fauve88lsg}. In this situation, the values of the $\tilde{a}_i := \arg(\delta) + \arg(a_i)$ determine whether the splitting method makes  sense for this specific value of $\delta$. If for all $i=1, \ldots, s$, $\tilde{a}_i \in [-\frac{\pi}{2},+\frac{\pi}{2}]$ then the method is well defined.
In order for the method to be applicable to such class of equations, it would make sense trying to minimize the value of $\max_{i=1,\ldots,s} |\arg(a_i)|$.

In this work, however, we focus on the case where the operator $A$ has a real spectrum, and thus we will only require that $|\arg(a_i)|\leq \pi/2$ (i.e., $\Re(a_i)\geq 0$) while minimizing the local error coefficients. In this sense, we have observed that the coefficients of accurate splitting methods with  $\Re(a_i)\geq 0$  tend to have also $b_i$ coefficients with positive real parts.

The plan of the paper is the following.
In Section
\ref{sect:aob}, we shall prove that if an $s$-jump construction is
carried out from a basic symmetric second-order method, it is bounded
to order $14$ and no more and we will further justify why solving
directly the system of order conditions leads to more efficient
methods. In Section~\ref{sect:cmw}, we solve the corresponding
order conditions of methods based on compositions of second order
integrators and construct several splitting methods whose
coefficients have positive real part. In particular, in
subsection~\ref{ssect:comp} we present splitting methods of orders
$6$ and $8$, obtained as a composition scheme with Strang splitting as basic integrator.
In addition, with the aim of showing that 14 is
not an order barrier in general, we have built explicitly a method
of order $16$ as a composition of methods of order 8, which in turn is obtained by composing the second order Strang splitting. In section \ref{sect:nt} we describe the implementation of the various
methods obtained in this paper and show their efficiency as
compared to already available methods on two test problems.
Finally, section \ref{sec.6} contains some discussion and
concluding remarks.

\section{An order barrier for the $s$-jump construction} \label{sect:aob}

A simple and very fruitful technique to build high-order methods
is to consider compositions of low-order ones with fractional time
steps.  In this way, numerical integrators of arbitrarily high order can be obtained.
For splitting methods aimed to integrate problems of the form (\ref{eq:para}), it is necessary,
however, that the coefficients have positive real part. The procedure has been carried out in
\cite{castella09smw,hansen09hos}, where composition methods up to order 14 have been constructed. We shall
prove here that 14 constitutes indeed an order barrier for this kind of approach. In other words, the
composition technique used in \cite{castella09smw,hansen09hos} does not allow for the construction of methods having all their coefficients in $\C_+:=\{z \in \C: \Re(z) \geq 0\}$ with orders strictly greater than $14$.
With this goal in mind we consider the following two families of methods: \\

\

{\bf Family I.} Given a method of order $p$,  $\Phi^{[p]}(h) = \e^{h(A+B)}+\mathcal{O}(h^{p+1})$, a sequence of
methods of orders $p+1, p+2, \ldots$ can be defined recursively by the compositions
\begin{equation}\label{eq:familyI1}
\Phi^{[p+q]}(h)=\prod_{i=1}^{m_q} \Phi^{[p+q-1]}(\alpha_{q,i} h)
:=  \Phi^{[p+q-1]}(\alpha_{q,1} h) \, \cdots \ \Phi^{[p+q-1]}(\alpha_{q,m_{q}} h),
\end{equation}
where for all $q \geq 1$,
\begin{eqnarray*}
\left( \, \forall \; 1 \leq i \leq m_q, \; \alpha_{q,i} \neq 0 \right), \quad \sum_{i=1}^{m_q} \alpha_{q,i} = 1
\quad  \mbox{ and }  \quad   \sum_{i=1}^{m_q} \alpha_{q,i}^{p+q} = 0.
\end{eqnarray*}
(Hereafter, we will interpret the product symbol from left to right).
Notice that if $p+q$ is even, the second condition has only complex solutions. \\

\

{\bf Family II.} Given a {\bf symmetric} method of order $2p$,$\tilde{\Phi}^{[2p]}(h)$, a
sequence of methods of orders $2(p+1)$, $2(p+2), \ldots$ can be defined recursively by
the {\bf symmetric} compositions
\begin{equation}\label{eq:familyII1}
\forall \, q \geq 1,  \quad \tilde{\Phi}^{[2(p+q)]}(h)=\prod_{i=1}^{m_q} \tilde{\Phi}^{[2(p+q)-2]}(\alpha_{q,i} h)
\end{equation}
where $\alpha_{q,m_q+1-i}=\alpha_{q,i}, \ i=1,2,\ldots$, and for
all $q \geq 1$,
\begin{eqnarray*}
\left( \, \forall \; 1 \leq i \leq m_q, \; \alpha_{q,i} \neq 0 \right),  \quad  \sum_{i=1}^{m_q} \alpha_{q,i} = 1
\quad \mbox{ and }   \quad  \sum_{i=1}^{m_q} \alpha_{q,i}^{2(p+q)+1} = 0.
\end{eqnarray*}
Methods of this class with real coefficients have been constructed by Creutz \& Gocksch \cite{creutz89hhm}, Suzuki
\cite{suzuki90fdo} and Yoshida \cite{yoshida90coh}. However, the second condition clearly indicates that
at least one of the coefficients must be negative. In contrast, there exist many complex solutions with
coefficients in $\C_+$.

Generally speaking, starting from $\Phi^{[1]}(h)$, the $(p+1)$-th member of family I is of the form
\begin{equation}\label{eq:familyIIGen}
 \Phi^{[p+1]}(h)=  \prod_{i_p=1}^{m_p} \left( \prod_{i_{p-1}=1}^{m_{p-1}} \left( \ldots
  \left( \prod_{i_{1}=1}^{m_1} \Phi^{[1]}(\alpha_{p,i_{p}}\alpha_{p-1,i_{p-1}}
  \cdots\alpha_{1,i_{1}}h) \right) \ldots \right) \right)
\end{equation}
and has  coefficients
\begin{equation}\label{eq:familyIIProd}
  \prod_{j=1}^p\alpha_{j,i_j}  \qquad
   1\leq i_1 \leq m_1, \ldots, 1 \leq i_p \leq m_p.
\end{equation}

A similar expression holds, of course, for methods of family II,
starting from $\tilde{\Phi}^{[2]}(h)$. Symmetric compositions for
the cases $m_1=m_2=\cdots=m_p=3$ and $m_1=m_2=\cdots=m_p=4$,
correspond to the triple and quadruple jump techniques,
respectively.

\begin{lemma}\label{lemcons}
Let $\Phi(h)$ be a consistent method (i.e., a method of order $p\geq 1$) and assume that the method
\begin{equation}\label{lemma2.3.1}
  \Psi(h) = \prod_{i=1}^{l}\Phi(\alpha_i h)
\end{equation}
is also consistent (i.e., $\sum_{i} \alpha_i=1$).
 If  there exists $k$, $1\leq k \leq l$, such that  $\Re(\alpha_k)<0$, then any consistent method of the form
\begin{equation}
\prod_{j=1}^{m} \Psi(\beta_j h)
  =  \prod_{j=1}^{m} \left(
  \prod_{i=1}^{l} \Phi(\beta_j\alpha_ih)
  \right)
\end{equation}
has at least one coefficient $\beta_j \alpha_k$, $1\leq j \leq m$, such that  $\Re(\beta_j \alpha_k)<0$.
\end{lemma}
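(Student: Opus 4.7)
The plan is to exploit the consistency condition for the outer composition in the most direct way possible. For the method $\prod_{j=1}^m \Psi(\beta_j h)$ to be consistent (i.e., a first-order approximation to $\e^{h(A+B)}$), the time-step weights must satisfy $\sum_{j=1}^m \beta_j = 1$. This is the only piece of information about the $\beta_j$ that I intend to use; the detailed structure of $\Psi$ and $\Phi$, and the order $p$, play no role.

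Multiplying the identity $\sum_{j=1}^m \beta_j = 1$ by the fixed complex number $\alpha_k$ gives
$$\sum_{j=1}^m \beta_j \alpha_k \;=\; \alpha_k,$$
and taking real parts yields
$$\sum_{j=1}^m \Re(\beta_j \alpha_k) \;=\; \Re(\alpha_k) \;<\; 0$$
by hypothesis. The left-hand side is a finite sum of real numbers whose total is strictly negative, so it cannot be the case that every summand is nonnegative. Hence there exists at least one index $j\in\{1,\ldots,m\}$ with $\Re(\beta_j \alpha_k) < 0$, which is exactly the claim.

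I do not anticipate any genuine obstacle: the argument is a one-line linearity/averaging observation. The only point that deserves care is the interpretation of the conclusion. After expansion, the coefficients appearing in front of each copy of $\Phi$ in $\prod_{j=1}^m \Psi(\beta_j h) = \prod_{j=1}^m \bigl(\prod_{i=1}^l \Phi(\beta_j \alpha_i h)\bigr)$ are precisely the products $\beta_j \alpha_i$, as already displayed in the lemma. The $m$ numbers $\{\beta_j \alpha_k\}_{j=1}^m$ are therefore the ``descendants'' of the offending coefficient $\alpha_k$ after one further composition stage, and what the above computation shows is that at least one of these descendants inherits a strictly negative real part. This is the structural content that will later rule out successive compositions from removing negative real parts once they are present.
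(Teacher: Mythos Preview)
Your proof is correct and follows essentially the same approach as the paper: use the consistency condition $\sum_{j=1}^m \beta_j = 1$, multiply by $\alpha_k$, take real parts to get $\sum_{j=1}^m \Re(\beta_j \alpha_k) = \Re(\alpha_k) < 0$, and conclude by the pigeonhole-type observation that not all summands can be nonnegative. The paper's proof is the same computation written as a single displayed chain of equalities.
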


\begin{proof}
By consistency one has $\sum_{j=1}^m \beta_j=1$, so that
$$
\sum_{j=1}^m \Re(\beta_j \alpha_k)=  \Re \left( \sum_{j=1}^m \beta_j \alpha_k \right) = \Re \left(
   \alpha_k \sum_{j=1}^m \beta_j \right) = \Re(\alpha_k) <0.
$$
This implies  the statement.
\end{proof}

\begin{lemma}\label{lemsect}
For $k \geq 2$ and $r \geq 2$, consider $(z_1, \ldots, z_k) \in \left(\C_{+}\right)^k$ such that
$\sum_{i=1}^k z_i^r=0$, $z_i \ne 0$ for $i=1, \ldots, k$. Then we have
$$
\max_{i=1, \ldots,k} \Arg(z_i) - \min_{i=1, \ldots,k} \Arg(z_i) \geq  \frac{\pi}{r}.
$$
\end{lemma}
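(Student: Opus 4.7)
The plan is to argue by contradiction: suppose all the arguments $\Arg(z_i)$ lie in an interval shorter than $\pi/r$, and show that in that case $\sum_i z_i^r$ cannot vanish because the summands all point into an open half-plane.

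More concretely, set $\phi_i = \Arg(z_i)\in [-\pi/2,\pi/2]$ (well-defined since $z_i \ne 0$ and $\Re(z_i)\geq 0$). Assume for contradiction that
$$
\max_i \phi_i - \min_i \phi_i < \frac{\pi}{r},
$$
so the $\phi_i$ all lie in some interval $[\alpha,\beta]$ of length $\beta-\alpha < \pi/r$. Let $\gamma = r(\alpha+\beta)/2$. For every $i$ we then have
$$
|r\phi_i - \gamma| \le \frac{r(\beta-\alpha)}{2} < \frac{\pi}{2},
$$
hence $\cos(r\phi_i - \gamma) > 0$.

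Writing $z_i^r = |z_i|^r e^{ir\phi_i}$ (so that $r\phi_i$ is \emph{some} argument of $z_i^r$, not necessarily the principal one), we compute
$$
\Re\bigl(e^{-i\gamma}\, z_i^r\bigr) = |z_i|^r \cos(r\phi_i - \gamma) > 0
$$
for each $i$ (using $z_i\neq 0$). Summing over $i$ yields
$$
\Re\Bigl(e^{-i\gamma}\sum_{i=1}^k z_i^r\Bigr) = \sum_{i=1}^k |z_i|^r \cos(r\phi_i-\gamma) > 0,
$$
which contradicts the hypothesis $\sum_i z_i^r = 0$. Therefore the assumption is false and $\max_i\phi_i - \min_i\phi_i \geq \pi/r$, as claimed.

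The only subtle point I foresee is the handling of the arguments: since $\phi_i\in[-\pi/2,\pi/2]$, the quantity $r\phi_i$ may exceed $\pi$ in absolute value when $r\geq 3$, so it is not the principal argument of $z_i^r$. This is harmless for the proof because the identity $z_i^r = |z_i|^r e^{ir\phi_i}$ only uses that $r\phi_i$ is \emph{an} argument; the contradiction is obtained geometrically, by exhibiting a single direction $e^{i\gamma}$ against which every $z_i^r$ has strictly positive inner product, which is incompatible with their sum being zero.
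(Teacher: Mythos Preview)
Your proof is correct and follows essentially the same approach as the paper: both argue by contradiction, rotate so that the central direction of the sector containing the $z_i$'s becomes the reference, and then observe that all the $z_i^r$ lie in an open half-plane (you via the explicit computation $\Re(e^{-i\gamma}z_i^r)>0$, the paper via convexity of the sector $K_{r\sigma}(r\theta)$), whence their sum cannot vanish. Your remark that $r\phi_i$ need not be the principal argument of $z_i^r$ is a valid clarification that the paper glosses over.
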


\begin{proof}
All the $z_i$'s belong to  the sector $K_{\sigma}(\theta) = \{ z \in \C: |\sigma-\Arg(z)| \leq \theta\}$ with
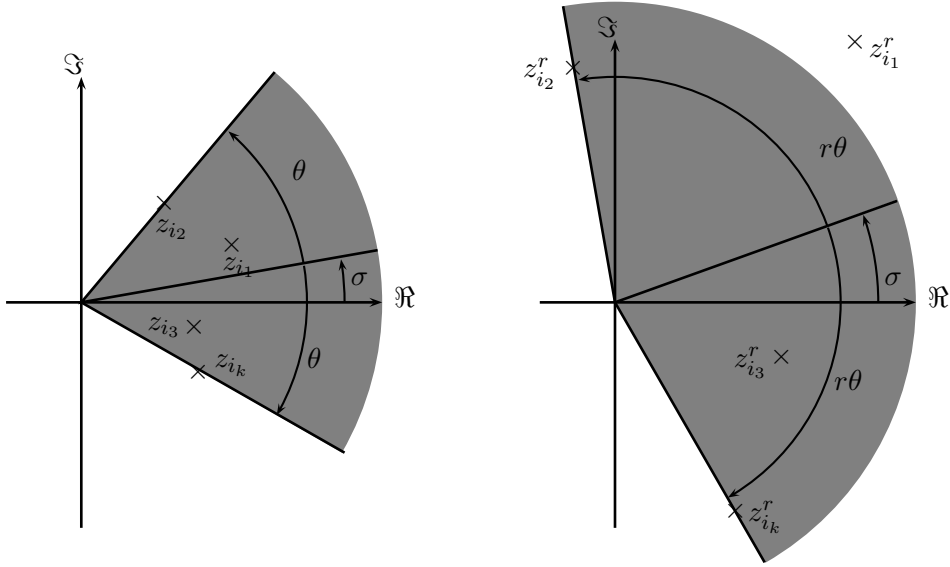
\begin{figure}
\begin{center}
\begin{pspicture}(-1,-3)(5,4)
\SpecialCoor
\pswedge*[linecolor=gray]{4}{10}{50}
\pswedge*[linecolor=gray]{4}{-30}{10}
\psline[linewidth=1pt](4;50)(0,0)(4;10)
\psline[linewidth=1pt](0,0)(3.5,-2.)
\psarc[arcsepB=2pt]{->}{3}{10}{50}
\rput[t](2.9,1.9){$\theta$}
\psarc[arcsepB=2pt]{->}{3.5}{0}{10}
\rput[t](3.7,0.4){$\sigma$}
\psarc[arcsepB=2pt]{<-}{3}{-30}{10}
\rput[t](3.1,-0.6){$\theta$}
\psline[linewidth=1pt]{->}(-1,0)(4,0)
\psline[linewidth=1pt]{->}(0,-3)(0,3)
\rput[t](-0.1,3.3){$\Im$}
\rput[t](4.3,0.2){$\Re$}
\rput[t](2.1,0.6){$z_{i_1}$}
\rput[t](1.2,1.1){$z_{i_2}$}
\rput[t](1.1,-0.2){$z_{i_3}$}
\rput[t](2.0,-0.7){$z_{i_k}$}
\rput[t](2.,0.9){$\times$}
\rput[t](1.1,1.45){$\times$}
\rput[t](1.5,-0.2){$\times$}
\rput[t](1.55,-0.8){$\times$}
\end{pspicture}
\begin{pspicture}(-2,-3)(5,4)
\SpecialCoor
\pswedge*[linecolor=gray]{4}{20}{100}
\pswedge*[linecolor=gray]{4}{-60}{20}
\psline[linewidth=1pt](0,0)(-0.69,3.94)
\psline[linewidth=1pt](0,0)(1.99,-3.46)
\psline[linewidth=1pt](0,0)(3.75,1.35)
\psarc[arcsepB=2pt]{->}{3}{20}{100}
\rput[t](2.9,2.2){$r \theta$}
\psarc[arcsepB=2pt]{->}{3.5}{0}{20}
\rput[t](3.7,0.4){$\sigma$}
\psarc[arcsepB=2pt]{<-}{3}{-60}{20}
\rput[t](3.1,-0.9){$r \theta$}
\psline[linewidth=1pt]{->}(-1,0)(4,0)
\psline[linewidth=1pt]{->}(0,-3)(0,3.5)
\rput[t](-0.1,3.8){$\Im$}
\rput[t](4.3,0.2){$\Re$}
\rput[t](3.6,3.52){$z_{i_1}^r$}
\rput[t](-1,3.25){$z_{i_2}^r$}
\rput[t](1.8,-0.6){$z_{i_3}^r$}
\rput[t](2.,-2.65){$z_{i_k}^r$}
\rput[t](3.19,3.6){$\times$}
\rput[t](-.56,3.25){$\times$}
\rput[t](2.21,-0.6){$\times$}
\rput[t](1.6,-2.65){$\times$}
\end{pspicture}
\end{center}
\caption{The enveloping sectors of $\{z_1, \ldots, z_k\} \subset \C_{+}$ and of $\{z_1^r, \ldots, z_k^r\} \subset \C$ (for $r=2$).\label{fig:envsect}}
\end{figure}
\begin{eqnarray*}
\sigma=\frac12 \left(\max_{i=1, \ldots,k} \Arg(z_i) + \min_{i=1, \ldots,k} \Arg(z_i) \right) \quad \mbox{ and } \quad
\theta = \frac12 \left(\max_{i=1, \ldots,k} \Arg(z_i) - \min_{i=1, \ldots,k} \Arg(z_i) \right),
\end{eqnarray*}
where $\Arg$ is the principal value of the argument (see the left picture of Figure \ref{fig:envsect}).
Now, assume that $\theta < \frac{\pi}{2 r}$. Then, all the $z_i^r$'s belong to $K_{r \sigma}(r \theta)$, which,
since $r \theta < \frac{\pi}{2}$, is a convex set. This implies that $\sum_{i=1}^k z_i^r$ also belongs to  $K_{r \sigma}(r \theta)$ (see the right picture of Figure \ref{fig:envsect}). The inequality $r \theta < \frac{\pi}{2}$ being strict and the $z_i$'s non-zero, we have furthermore $\sum_{i=1}^k z_i^r \neq 0$, which  contradicts the assumption. The result follows.
\end{proof}

\begin{theorem} \label{theorem2.1a}
(i) Starting from a first-order method $\Phi^{[1]}(h)$, all
methods $\Phi^{[p+1]}(h)$ of order $p+1$, $p=3, 4, \ldots$ from
Family I have at least one coefficient with negative real part.
(ii) Starting from a second-order symmetric method
$\tilde{\Phi}^{[2]}(h)$, all methods $\tilde{\Phi}^{[2p+2]}(h)$ of
order $2p+2$, $p=7, 8, \ldots$ from Family II have at least one
coefficient with negative real part.
\end{theorem}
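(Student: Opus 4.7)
We argue both parts by contradiction, supposing that every coefficient $\prod_{j=1}^p \alpha_{j,i_j}$ of the composed method has non-negative real part. The argument is the same in both cases; the only difference lies in the exponent $r_q$ appearing in the vanishing power-sum condition at level $q$, namely $r_q=q+1$ for Family~I (starting from order~$1$) and $r_q=2q+1$ for Family~II (corresponding to the Yoshida-type order condition when passing from symmetric order $2q$ to symmetric order $2q+2$).

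First I would reduce to single-level positivity, namely $\Re\alpha_{q,i}\geq 0$ for all $q,i$. Were $\Re\alpha_{q_0,i_0}<0$ for some $q_0,i_0$, fixing $i_{q_0}=i_0$ and summing $\prod_j\alpha_{j,i_j}$ over the remaining $p-1$ indices, the consistency relations $\sum_{i_j}\alpha_{j,i_j}=1$ at the other levels telescope the sum to $\alpha_{q_0,i_0}$; the negativity of its real part would then force some summand -- a coefficient of the full method -- to have negative real part, contradicting the assumption. (This is the mechanism encoded in Lemma~\ref{lemcons}.) Hence we may assume $\arg\alpha_{q,i}\in[-\pi/2,\pi/2]$ for every $q,i$.

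Next I would exploit Lemma~\ref{lemsect}: applied to $\sum_i\alpha_{q,i}^{r_q}=0$, it gives a spread of arguments of at least $\pi/r_q$ at each level $q$. Let $\alpha_q^\pm$ denote coefficients realizing the extremal arguments at level $q$, and consider the chain of full-method coefficients
\[
C_\ell \;=\; \alpha_1^+\cdots\alpha_\ell^+\,\alpha_{\ell+1}^-\cdots\alpha_p^-, \qquad 0\leq \ell\leq p,
\]
with un-reduced argument sums $S_\ell=\sum_{j\leq\ell}\arg\alpha_j^+ +\sum_{j>\ell}\arg\alpha_j^-$. Then $(S_\ell)$ is strictly increasing with $S_\ell-S_{\ell-1}=\arg\alpha_\ell^+-\arg\alpha_\ell^-\geq \pi/r_\ell$, so
\[
S_p-S_0 \;\geq\; \sum_{\ell=1}^p \pi/r_\ell.
\]
The hypothesis $\Re(C_\ell)\geq 0$ is equivalent to $S_\ell\in\bigcup_{k\in\Z}[2\pi k-\pi/2,\,2\pi k+\pi/2]$, a union of closed intervals of length $\pi$ separated by open gaps of length $\pi$. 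Since each increment is at most $\pi$ (both extremal arguments belonging to $[-\pi/2,\pi/2]$), the monotone chain cannot cross such a gap and must lie inside a single interval, forcing $S_p-S_0<\pi$. It therefore suffices to verify $\sum_{\ell=1}^p 1/r_\ell>1$: for (i), this is $H_{p+1}-1$, which equals $13/12>1$ already at $p=3$; for (ii), a direct computation gives $\sum_{\ell=1}^7 1/(2\ell+1)\approx 1.022>1$, versus $\approx 0.955$ at $p=6$, pinpointing the announced thresholds.

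The main technical subtlety is the borderline case in which an increment equals exactly $\pi$, which forces the two extremal coefficients $\alpha_q^\pm$ at some level onto the imaginary axis and would allow the chain to traverse a gap tangentially at its endpoints. This configuration is ruled out either by replacing the extremal choice with a near-extremal coefficient whose argument is strictly inside $(-\pi/2,\pi/2)$ -- the strict margin in $\sum\pi/r_\ell>\pi$ leaving enough slack to absorb the small resulting loss in the span bound -- or by observing that simultaneous $\Re\alpha_{q,i^+}=\Re\alpha_{q,i^-}=0$ at a level is algebraically incompatible with $\sum_i\alpha_{q,i}=1$ together with $\sum_i\alpha_{q,i}^{r_q}=0$ and $\alpha_{q,i}\neq 0$.
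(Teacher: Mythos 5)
Your argument is structurally the same as the paper's: the reduction to level-wise positivity is exactly the mechanism of Lemma~\ref{lemcons}, the per-level argument spread $\geq \pi/r_q$ is Lemma~\ref{lemsect}, and the numerical thresholds ($\tfrac12+\tfrac13+\tfrac14=\tfrac{13}{12}>1$ and $\tfrac13+\tfrac15+\cdots+\tfrac1{15}>1$) are the same ones used in the paper. You are in fact more careful than the paper on one point: the paper simply adds the level spreads and compares the result with $\pi$, implicitly identifying $\Arg\bigl(\prod_j\alpha_{j,i_j}\bigr)$ with $\sum_j\Arg(\alpha_{j,i_j})$; your chain $C_\ell$ with unreduced argument sums $S_\ell$ makes that step precise.

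The weak spot is your treatment of the borderline case (an increment exactly equal to $\pi$). The second alternative you offer is false: two purely imaginary coefficients at one level are perfectly compatible with $\sum_i\alpha_{q,i}=1$, $\sum_i\alpha_{q,i}^{r_q}=0$ and $\alpha_{q,i}\neq 0$; for instance $\{\,i/\sqrt2,\ -i/\sqrt2,\ 1\,\}$ works for $r_q=2$, and analogous configurations exist for every $r_q\geq 3$. The first alternative is not justified as stated: nothing guarantees that the problematic level contains a coefficient with argument strictly inside $(-\pi/2,\pi/2)$ yet close to the extreme, so the loss incurred by swapping the extremal coefficient is not controlled by the small slack $\sum_\ell \pi/r_\ell-\pi$ (only $\pi/12$ for Family I, about $0.07$ for Family II). The case can be closed, but you must use more of the hypothesis than the single chain: if the extremes at level $\ell$ sit exactly at $\pm\pi/2$, then, since \emph{all} products are assumed to lie in $\C_{+}$, pairing $\alpha_\ell^{+}$ and $\alpha_\ell^{-}$ with any fixed choice of coefficients at the other levels forces the sum of those other arguments to be an integer multiple of $\pi$; varying one index at a time then shows that at each level $j\neq\ell$ the arguments are either all equal (contradicting $\sum_i\alpha_{j,i}^{r_j}=0$, since the power sum then has a common nonzero phase) or all equal to $\pm\pi/2$ (contradicting $\sum_i\alpha_{j,i}=1$). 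With such a repair (the paper itself does not address this degeneracy at all), the rest of your proposal is correct.
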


\begin{proof}
We prove at once the two assertions. We first notice that, according to Lemma \ref{lemcons},
if method $\Phi^{[p]}(h)$ of Family I (respectively,
method $\tilde{\Phi}^{[2p]}(h)$ of Family II), has a coefficient with negative real part, then all subsequent
methods $\Phi^{[p+q]}(h)$, $q \geq 1$, of  Family I (respectively, methods $\tilde{\Phi}^{[2(p+q)]}(h)$ of Family II),
also have a coefficient with negative real part. Hence, we assume that all methods
$\Phi^{[q+1]}(h)$, $q=1, \ldots, p$ from Family I (respectively, all methods $\tilde{\Phi}^{[2q+2]}(h)$ of Family II),  have all their coefficients in $\C_{+}$.
Using Lemma \ref{lemsect} we  have
$$
\forall \, q=1, \ldots, p, \quad \max_{i=1, \ldots,m_q} \Arg(\alpha_{q,i}) - \min_{i=1, \ldots,m_q} \Arg(\alpha_{q,i}) \geq  \frac{\pi}{q+1}
$$
(respectively
$$
\forall q=1, \ldots, p, \quad \max_{i=1, \ldots,m_q} \Arg(\alpha_{q,i}) - \min_{i=1, \ldots,m_q} \Arg(\alpha_{q,i}) \geq  \frac{\pi}{2q+1}),
$$
so that
$$
\max_{1 \leq i_1 \leq m_1, \ldots, 1 \leq i_p \leq m_p} \Arg \left(\prod_{j=1}^p\alpha_{j,i_j} \right) - \min_{1 \leq i_1 \leq m_1, \ldots, 1 \leq i_p \leq m_p} \Arg \left(\prod_{j=1}^p\alpha_{j,i_j} \right) \geq  \frac{\pi}{2} + \cdots + \frac{\pi}{p+1}
$$
(respectively
$$
\max_{1 \leq i_1 \leq m_1, \ldots, 1 \leq i_p \leq m_p} \Arg \left(\prod_{j=1}^p\alpha_{j,i_j} \right) - \min_{1 \leq i_1 \leq m_1, \ldots, 1 \leq i_p \leq m_p} \Arg \left(\prod_{j=1}^p\alpha_{j,i_j} \right) \geq  \frac{\pi}{3} + \cdots + \frac{\pi}{2p+1}).
$$
Now,  since $\frac12+\frac13+\frac14>1$, $p=3$ in the first case and thus the first statement follows. For
Family II, since $\frac13+\frac15+\cdots+\frac{1}{15}>1$, then $p=7$, thus leading to the second statement.
 \end{proof}
 \begin{remark}
No method of Family I with coefficients in $\C_{+}$ can have an
order strictly greater than $3$. Such methods of order $3$ have
been constructed in \cite{hansen09hos}. Similarly, no method of
Family II with coefficients in $\C_{+}$ can have an order strictly
greater than $14$. Such methods with orders up to $14$ have been
constructed in \cite{castella09smw,hansen09hos}.
For instance, let us consider the quadruple jump composition
\begin{equation}\label{eq:Qjump}
 \tilde{\Phi}^{[2p+2]}(h)=
 \tilde{\Phi}^{[2p]}(\alpha_{p,1}h) \,
 \tilde{\Phi}^{[2p]}(\alpha_{p,2}h) \,
 \tilde{\Phi}^{[2p]}(\alpha_{p,2}h) \,
 \tilde{\Phi}^{[2p]}(\alpha_{p,1}h),
\end{equation}
where $\alpha_{p,1} +\alpha_{p,2} = 1/2$ and $\alpha_{p,1}^{2p+1}
+\alpha_{p,2}^{2p+1} = 0$. This system has $p$ solutions (and their
complex conjugate). The solution with minimal argument is, as noticed in
\cite{castella09smw,hansen09hos},
\begin{eqnarray*}
  \alpha_{p,1} = \frac14 \left( 1 + i
  \frac{\sin\left(\frac{\pi}{2p+1}\right)}{1+\cos\left(\frac{\pi}{2p+1}\right)} \right)
  , \qquad     \alpha_{p,2} =  \bar{\alpha}_{p,1}
\end{eqnarray*}
(and its complex conjugate). It is straightforward to verify that
$\arg(\alpha_{p,1})=\frac{\pi}{2(2p+1)}$ and
$\arg(\alpha_{p,2})=-\frac{\pi}{2(2p+1)}$, so that
\[
  \arg(\alpha_{p,1}) - \arg(\alpha_{p,2}) = \frac{\pi}{2p+1}.
\]
Comparing with the proof of
Theorem~\ref{theorem2.1a}, we observe that the bounds obtained there are
sharp since a family of methods do exist satisfying the equality.  
 \end{remark}
 \begin{remark}
It \emph{is} however possible to construct a composition method with all coefficients having positive real part of order strictly greater than $14$ directly from a symmetric second order method. For example, in Subsection~\ref{ssect:comp} we present a new method of sixteenth-order built as
\begin{equation}\label{eq:Ord16}
 \Phi^{[16]}(h)=
 \prod_{i=1}^{21} \Phi^{[8]}(\alpha_{i}h), \qquad \mbox{with}
 \qquad
 \Phi^{[8]}(h)=\prod_{j=1}^{15} \Phi^{[2]}(\beta_{j}h)
\end{equation}
and the coefficients satisfying $\Re(\alpha_i \beta_j) > 0$ for all $i=1,\ldots,21$, $j=1,\ldots,15$,
with $\alpha_{22-i}=\alpha_i, \ \beta_{16-j}=\beta_j, \
i,j=1,2,\ldots$. Here, $\Phi^{[8]}(h)$ is a symmetric
composition of symmetric second order methods, but it is not a
composition of methods of order 4 or 6, and similarly for
$\Phi^{[16]}(h)$, which is not a composition of methods of orders
$10$, $12$ or $14$.
\end{remark}

\begin{theorem} \label{theorem2.4}
 Splitting methods of the class (\ref{eq:splittingmethod}) with
$\Re(a_i)\geq 0$ exist at least up to order 44.
\end{theorem}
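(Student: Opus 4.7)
The plan is to construct the order-44 method as the outcome of a cascade of symmetric quadruple-jump compositions of the form (\ref{eq:Qjump}) applied iteratively on top of a carefully chosen symmetric order-8 splitting method. First, I would take a symmetric order-8 method $\Phi^{[8]}(h)$ of the form (\ref{eq:splittingmethod}) whose $a_i$ coefficients all satisfy $|\arg(a_i)| \leq \theta_8$ for some small $\theta_8>0$, borrowed from the explicit construction carried out in Subsection~\ref{ssect:comp}. Then, for $p = 4, 5, \ldots, 21$, I would define recursively
\[
 \Phi^{[2p+2]}(h) = \Phi^{[2p]}(\alpha_{p,1}h)\,\Phi^{[2p]}(\alpha_{p,2}h)\,\Phi^{[2p]}(\alpha_{p,2}h)\,\Phi^{[2p]}(\alpha_{p,1}h),
\]
with $\alpha_{p,1}=\tfrac{1}{4}\bigl(1 + i\tan\bigl(\pi/(2(2p+1))\bigr)\bigr)$ and $\alpha_{p,2}=\bar{\alpha}_{p,1}$, as in the quadruple-jump solution recalled in the remark following Theorem~\ref{theorem2.1a}. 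The identities $\alpha_{p,1}+\alpha_{p,2}=1/2$ and $\alpha_{p,1}^{2p+1}+\alpha_{p,2}^{2p+1}=0$ guarantee that $\Phi^{[2p+2]}$ has order $2p+2$, and one has $\arg(\alpha_{p,i}) = \pm \pi/(2(2p+1))$, which is the equality case of Lemma~\ref{lemsect}.

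Next, I would check that every $a_i$ coefficient of the resulting $\Phi^{[44]}$ has non-negative real part. By construction, each such coefficient is the product of an $a_i$ of $\Phi^{[8]}$ with a factor $\prod_{p=4}^{21}\alpha_{p,i_p}$ for some choice $i_p\in\{1,2\}$, so its argument is bounded in modulus by
\[
 \theta_8 + \sum_{p=4}^{21}\frac{\pi}{2(2p+1)} = \theta_8 + \frac{\pi}{2}\,\sum_{k=4}^{21}\frac{1}{2k+1}.
\]
A direct numerical evaluation gives $\sum_{k=4}^{21} 1/(2k+1) = 1/9 + 1/11 + \cdots + 1/43 \approx 0.8525 < 1$. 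Consequently, as long as $\theta_8 < (\pi/2)(1-0.8525) \approx 0.074\,\pi$, every argument stays strictly inside $(-\pi/2,\pi/2)$, which is exactly $\Re(a_i)\geq 0$ for every $a_i$ coefficient of $\Phi^{[44]}$.

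The main obstacle is thus concentrated in the first step: exhibiting a symmetric order-8 splitting method of class (\ref{eq:splittingmethod}) whose $a_i$-arguments all fit into a half-sector of opening less than about $0.074\,\pi$ (roughly $13.3^\circ$). This is a concrete constraint on the nonlinear order-8 system, and is expected to be the delicate part; the explicit constructions of Section~\ref{sect:cmw} are relied on here. Once such a $\Phi^{[8]}$ is available, the cascade of quadruple jumps and the one-line numerical verification $\sum_{k=4}^{21} 1/(2k+1) < 1$ close the argument with essentially no further work, and incidentally show that the bound 44 is merely the order one reaches before the slack in the initial sector is exhausted.
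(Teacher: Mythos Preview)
Your overall strategy---iterate the minimal-argument quadruple jump on top of a seed method and bound the accumulated argument of the $a_i$'s by a partial sum of $1/(2k+1)$---is exactly the paper's. The difference lies entirely in the choice of seed, and that is where your proposal has a genuine gap.

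You start from a symmetric order-$8$ method whose $a_i$'s lie in a sector of half-angle $\theta_8 < (\pi/2)\bigl(1-\sum_{k=4}^{21} 1/(2k+1)\bigr) \approx 0.074\,\pi$, and you claim this seed can be ``borrowed'' from Subsection~\ref{ssect:comp}. It cannot. The eighth-order method (\ref{P8S15}) built there has $a_j=\gamma_j$, and the remark at the end of that subsection records explicitly that $\max_i \Arg(\gamma_i) - \min_i \Arg(\gamma_i) \approx 2.6$, i.e.\ a half-angle of roughly $1.3$ radians, an order of magnitude too large. The alternative $8$th-order method used there as the base for the order-$16$ scheme is only said to lie in a ``narrower sector''---narrow enough to survive \emph{one} composition level, not eighteen. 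So the step you yourself flag as ``the delicate part'' is left unresolved, and nothing in Section~\ref{sect:cmw} supplies it.

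The paper sidesteps this difficulty by going one order lower: it exhibits (with explicit coefficients posted online) a symmetric \emph{sixth}-order splitting method with all $a_i \in \R^+$, hence $\theta_6 = 0$. Starting the quadruple-jump cascade from this seed, the accumulated argument after raising the order from $6$ to $44$ is bounded by
\[
\frac{\pi}{2}\sum_{k=3}^{21}\frac{1}{2k+1} \;=\; \frac{\pi}{2}\Bigl(\frac{1}{7}+\frac{1}{9}+\cdots+\frac{1}{43}\Bigr) \;<\; \frac{\pi}{2},
\]
because $\tfrac{1}{7}+\cdots+\tfrac{1}{43} < 1 < \tfrac{1}{7}+\cdots+\tfrac{1}{45}$. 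The whole weight of the argument thus rests on an explicitly constructed order-$6$ scheme with real positive $a_i$'s, not on an assumed order-$8$ scheme with a tight sector. Incidentally, applying one quadruple jump to that order-$6$ method would produce an order-$8$ seed with $\theta_8 = \pi/14 \approx 0.0714\,\pi$, which \emph{does} meet your constraint---but that just collapses your argument into the paper's.
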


\begin{proof}
In \cite{castella09smw}, a fourth-order method was obtained with
$a_i\in\mathbb{R}^{+}$. In a similar way, we have also built a
sixth-order scheme with $a_i\in\mathbb{R}^{+}$ (whose coefficients
can be found at \newline
\texttt{http://www.gicas.uji.es/Research/splitting-complex.html}).
Using this as the basic method in the quadruple
jump (\ref{eq:Qjump}) with coefficients chosen with the minimal
argument and, since
\[
  \frac17+\cdots + \frac{1}{43}<1<\frac17+\cdots + \frac{1}{45}
\]
we conclude that all methods obtained up to order 44 will satisfy that
$\Re(a_i)\geq 0$.
\end{proof}

 \begin{remark}
The question of the existence of splitting schemes at any order
with $\Re(a_i)\geq 0$ remains still open.
\end{remark}

\section{Splitting methods with all coefficients having positive real part}
\label{sect:cmw}

\subsection{Order conditions and leading terms of local error}
\label{ssect:ocs}

We have seen that the composition technique to construct high
order methods inevitably leads to an order barrier. In addition,
the resulting methods require a large number of evaluations (i.e.
$3^{n-1}$ or $4^{n-1}$ evaluations to get order $2n$ using the
triple or quadruple jump, respectively) and usually have large
truncation errors. In this section we show that, as with real
coefficients, it is indeed possible to build very efficient high
order splitting methods whose coefficients have positive real part
by solving directly the order conditions necessary to achieve a
given order $p$. These are, roughly speaking, large systems of
polynomial equations in the coefficients $a_i$, $b_i$ of the
method (\ref{eq:splittingmethod}), arising by requiring that the
formal expansion of the method satisfies (\ref{eq:formalLE}) for
arbitrary non-commuting operators $A$ and $B$.

Different (but equivalent) formulations of such order conditions exist
in the literature \cite{hairer06gni}. Among them, the one using
Lyndon multi-indices is particularly appealing. It was first
introduced  in \cite{chartier09aat} (see also \cite{blanes08sac})
and can be considered as a variant of the classical treatment
obtained in \cite{murua99ocf}.

This analysis shows that the number of order conditions for general splitting methods of the form
(\ref{eq:splittingmethod}) grows very rapidly with the order $p$, even when one considers only symmetric methods.
For instance,
there are $26$ independent 8th-order conditions and $82$ 10th-order conditions for a
consistent symmetric splitting method. It makes sense, then, to examine alternatives to achieve order higher than six.  This can be accomplished by taking  compositions of a basic symmetric method of even order.
In particular, if we consider any of the two versions of Strang splitting (\ref{eq:strang2}) as the basic method $S(h)$, then, for each $\gamma = (\gamma_1,\ldots,\gamma_m) \in \C^m$,
\begin{eqnarray}
\label{eq:comppsi}
  \Psi(h) = S(\gamma_1 h) \cdots S(\gamma_s h)
\end{eqnarray}
will be  a new splitting method of the form (\ref{eq:splittingmethod}). Now the consistency condition reads
\begin{equation}
  \label{eq:consistencygamma}
  \gamma_{1}+\cdots + \gamma_{s}=1.
\end{equation}
As for the additional conditions to attain order $p$, these can be
obtained by generalizing the treatment done in \cite{murua99ocf}.
Splitting methods with very high order can be constructed as
(\ref{eq:comppsi}) by considering as basic method $S(h)$ a
symmetric method of even order $2q>2$. In that case, it can be
shown that the corresponding number of order conditions is
considerably reduced with respect to (\ref{eq:splittingmethod}).
Thus, for instance, if $S(h)$ is a symmetric splitting method of
order eight, a method (\ref{eq:comppsi}) satisfying the
consistency condition (\ref{eq:consistencygamma}) and the symmetry
condition
\begin{equation}    \label{eq:gammasym}
   \gamma_{s-j+1} = \gamma_j, \qquad 1 \le j \le s
\end{equation}
only needs to satisfy 10 additional conditions to attain order sixteen.

\subsection{Leading term of the local error}
\label{ssect:le}

To construct splitting methods of a given order $p$ within a family of schemes, we choose the number $s$ of stages in such a way that the number of unknowns equals the number of order conditions, so that one typically has a finite number of isolated (real or complex) solutions, each of them leading to a different splitting method. Among them, we will be interested in methods such that, either  $a_i\geq 0$ (and each $b_i$ are arbitrary complex numbers) or  $\Re(a_i)\geq 0$ and $\Re(b_i)\geq 0$. The relevant question at this point is how to choose the `best'
solution in the set of all solutions satisfying the required conditions. It is generally accepted that good splitting methods must have small coefficients $a_i,b_i$. Methods with large coefficients tend to show bad performance in general, which is particularly true when relatively long time-steps $h$ are used. In addition, as mentioned in the introduction, when applying splitting methods to the class of problems considered here, the arguments of the complex coefficients $a_i,b_i$ must also be taken into account.

In order to chose the best scheme among two methods with coefficients of similar size included in sectors with similar angle, we analyze the leading term of the local error of the splitting method. If (\ref{eq:splittingmethod}) is of order $p$, then we formally have that
\begin{eqnarray*}
  \Psi(h) - \e^{h (A+B)} &=& h^{p+1}E_{p+1} + \cO(h^{p+1}),\\
E_{p+1} &:=& \sum_{i_1+\cdots+i_{2m}=p+1} v_{i_1, \ldots,
i_{2m}}(\gamma) \,
 A^{i_1} B^{i_2} \ldots A^{i_{2m-1}} B^{i_{2m}},
\end{eqnarray*}
where $\gamma = (\gamma_1, \ldots, \gamma_{s})$ is given in terms
of the original coefficients $a_i$, $b_i$ of the integrator by
\[
  a_j = \gamma_{j}, \qquad
  b_{j-1} = \frac{\gamma_{j-1}+\gamma_{j}}{2}
\]
($\gamma_0 = \gamma_{s+1} = 0$) and each $v_{i_1, \ldots,
i_{2m}}(\gamma)$ is a linear combination of polynomials in
$\gamma$ \cite{blanes08sac}.

Incorporating that into the results in~\cite{hansen09hos}, it can be shown that, if the smoothness assumption stated in the introduction is increased from $p+1$ to $p+2$, then
for sufficiently small $h$, the local error is dominated by
$||h^{p+1}E_{p+1} u_0||$.

Our strategy to select a suitable method among all possible choices is then the following:
first
choose a subset of solutions with reasonably small maximum norm of
the coefficient vector $(\gamma_1,\ldots,\gamma_{s})$, and then,
among them,  choose the one that minimizes the norm
\begin{equation}
  \label{eq:lenorm}
  \sum_{i_1+\cdots+i_{2m}=p+1} |v_{i_1 \cdots i_{2m}}(\gamma) |
\end{equation}
of the coefficients of the leading term of the local error. This
seems reasonable if one is interested in choosing a splitting
method that works fine for arbitrary operators $A$ and $B$
satisfying the semi-group and smoothness conditions mentioned in
the introduction. Of course, this does not guarantee that a method
with a smaller value of (\ref{eq:lenorm}) will be more precise for
any $A$ and $B$ than another method with a larger value of
(\ref{eq:lenorm}). However, we have observed in practice when
solving the order conditions of different families of splitting
methods, that  the solution that minimizes (\ref{eq:lenorm}) tend
to have smaller values of most (or even all) coefficients $|v_{i_1
\cdots i_{2m}}(\gamma)|$ when compared to a solution having a
larger norm (\ref{eq:lenorm}) of the coefficients of the leading
term of the local error.

When $A$ and $B$ are operators in a real Banach space $X$, then it
makes sense to compute the approximations $u_n=u(t_n)$, as $u_n =
\Re(\Psi(h))u_{n-1}$. In that case, the argument above holds with
$\Psi(h)$ replaced by $\hat \Psi(h) = \Re(\Psi(h))$ and the local
error coefficients $v_{i_1, \ldots, i_{2m}}(\gamma)$ replaced by
$\Re(v_{i_1, \ldots, i_{2m}}(\gamma))$. In that case,
(\ref{eq:lenorm}) should be replaced by
\begin{equation}
  \label{eq:lenormr}
    \sum_{i_1+\cdots+i_{2m}=p+1} |\Re(v_{i_1, \ldots, i_{2m}}(\gamma) )|
\end{equation}
as a general measure of leading term of the local error.

\subsection{High order splitting methods obtained as a composition of simpler methods}
\label{ssect:comp}

\paragraph{Order 6.}

We first consider sixth-order symmetric splitting methods obtained as a composition (\ref{eq:comppsi}) of
the Strang splitting (\ref{eq:strang2}) as basic method. In this case the coefficients $\gamma_i$ must satisfy three order conditions, in addition to the symmetry (\ref{eq:gammasym}) and consistency requirements, to achieve order six. We thus take $s=7$, so that we have three equations and three unknowns. Such a system of polynomial equations has 39 solutions in the complex domain (three real solutions among them), 12 of them giving a splitting method with coefficients of positive real part.
According to the criteria described in Subsection~\ref{ssect:le}, we arrive at the scheme
\begin{eqnarray} \label{P6S7}
\gamma_1 = \gamma_7 &=&  0.116900037554661284389 + 0.043428254616060341762 i\\
\gamma_2 = \gamma_6 &=& 0.12955910128208826275  - 0.12398961218809259330 i,\nonumber \\
\gamma_3 = \gamma_5 &=& 0.18653249281213381780  + 0.00310743071007267534 i,\nonumber \\
\gamma_4 &=& 0.134016736702233270122 + 0.154907853723919152396 i.\nonumber
\end{eqnarray}
This method turns out to correspond to one of those obtained  by Chambers (see Table 4 in \cite{chambers03siw}).

\paragraph{Order 8.}

For consistent symmetric methods (\ref{eq:comppsi}) of order eight, we have seven order conditions. By taking $s=15$ stages, one ends up with a system of seven polynomial equations and seven unknowns. We have performed an extensive numerical search of solutions with small norm, finding 326 complex solutions. Among them, 162 lead to splitting methods whose coefficients possess positive real part. The best method, according to the criteria established in Subsection~\ref{ssect:le}, is
\begin{eqnarray} \label{P8S15}
\gamma_1 = \gamma_{15} &=& 0.053475778387618596606 + 0.006169356340079532510 i, \\
\gamma_2 = \gamma_{14} &=&    0.041276342845804256647 - 0.069948574390707814951 i, \nonumber \\
\gamma_3 = \gamma_{13} &=&    0.086533558604675710289 - 0.023112501636914874384 i, \nonumber \\
\gamma_4 = \gamma_{12} &=&    0.079648855663021043369 + 0.049780495455654338124 i, \nonumber \\
\gamma_5 = \gamma_{11} &=&    0.069981052846323122899 - 0.052623937841590541286 i, \nonumber \\
\gamma_6 = \gamma_{10} &=&    0.087295480759955219242 + 0.010035268644688733950 i,  \nonumber \\
\gamma_7 = \gamma_{9} &=&    0.042812886419632082126 + 0.076059456458843523862 i, \nonumber \\
\gamma_8 &=&    0.077952088945939937643 + 0.007280873939894204350 i. \nonumber
\end{eqnarray}

\paragraph{Order 16.}

Motivated by the results in Section~\ref{sect:aob}, we have also
constructed a splitting method of order 16. Our aim, rather than proposing a very efficient scheme,
is to show that the barrier of order 14 existing for methods
built by applying the recursive composition technique starting from order two (family II) does not apply in general.

The construction procedure can be summarized as follows. We consider
a consistent symmetric composition of the form (\ref{eq:comppsi}), where now the basic method $S(h)$ is any symmetric eighth order scheme. Under such conditions, ten order conditions must be satisfied to achieve order 16. We accordingly take  $s=21$, so that we have ten polynomial equations with ten unknowns. We have performed an extensive numerical search of solutions with relatively small norm, finding $70$ complex solutions with positive real part. Combined with the
$162$ methods of order eight, this leads to $11340$ different $16$-th order splitting methods with $s=315$ stages. Among them, only $324$ give rise to splitting methods with coefficients of positive real part. The coefficients of the method that we have determined as optimal can be found at \texttt{http://www.gicas.uji.es/Research/splitting-complex.html}.

\begin{remark}
Notice that, whereas for the sixth-order method (\ref{P6S7}) one has
$$
\max_{i=1, \ldots,7} \Arg(\gamma_i) - \min_{i=1, \ldots,7}
\Arg(\gamma_i) \approx 1.621 < \frac{\pi}{3}+\frac{\pi}{5}
$$
and then the coefficients $\gamma_i$ are distributed in a narrower
sector than for triple or quadruple jump methods, for the
eighth-order method (\ref{P8S15}) one has
$$
\max_{i=1, \ldots,15} \Arg(\gamma_i) - \min_{i=1, \ldots,15}
\Arg(\gamma_i) \approx 2.5997 >
\frac{\pi}{3}+\frac{\pi}{5}+\frac{\pi}{7}.
$$
This method, whereas being the most efficient, is not the
appropriate one to be used as basic scheme to build higher order methods by composition. The previous
16th-order integrator has been built starting with another 8th-order method whose
coefficients are placed in a narrower sector.
\end{remark}

\section{Numerical tests} \label{sect:nt}

For our numerical experiments, we consider two different test problems: a linear reaction-diffusion equation, and the semi-linear Fisher's equation, both with periodic boundary conditions in space. It should be mentioned here that this last case is not covered by the theoretical framework summarized in the Introduction. For each case, we detail the experimental setting
and collect the results achieved by the different schemes. Our main purpose here is just to illustrate the
performance of the new splitting methods to carry out the time integration as compared with those constructed by
using the Yoshida--Suzuki triple jump
composition technique for both examples. Notice that, in this sense, the particular
scheme used to discretize in space is irrelevant.  For that reason, and to keep the
treatment as simple as possible,
we have applied a simple second-order finite difference scheme in space.

The numerical approximations $u_n$ obtained by each method $\Psi(h)$ are computed as
$u_n = \Re(\Psi(h)) u_{n-1}$. In other words, we project on the real axis after completing each time step.

\subsection{A linear parabolic equation}
Our first test-problem is the  scalar equation in one-dimension
\begin{eqnarray} \label{eq:lrd}
\frac{\partial u(x,t)}{\partial t} = \alpha \Delta u(x,t) + V(x,t) u(x,t),  \qquad u(x,0) = u_0(x),
\end{eqnarray}
with $u_0(x) = \sin(2 \pi x)$ and periodic boundary conditions in
the space domain $[0,1]$. We take $\alpha = \frac{1}{4}$,
$V(x,t)=3+\sin(2 \pi x)$ and discretize in space
$$x_j= j (\delta x), \qquad j=1,\ldots,N \quad \mbox{ with } \quad \delta x = 1/N,
$$
thus arriving at the differential equation
\begin{equation} \label{eq:problem0}
\frac{dU}{dt} = \alpha A U + B U,
\end{equation}
where $U=(u_1, \ldots,u_N) \in \R^N$. The Laplacian $\Delta$ has been approximated by the matrix $A$ of size $N\times N$ given by
$$
A=\frac{1}{(\delta x)^2}\begin{pmatrix}
-2&1& & & 1\\
1 & -2 & 1 \\
 & 1 & -2 & 1 \\
 & & \ddots & \ddots & \ddots \\
1 & & & 1 & -2\end{pmatrix},
$$
and  $B=\mbox{diag}(V(x_1), \ldots, V(x_N) )$. We take
$N=100$ points and compare different composition methods by
computing the corresponding approximate solution on the time interval $[0,1]$.
In  particular, we consider the following schemes:
\begin{itemize}
\item {\bf Strang}: The second-order symmetric Strang splitting method (\ref{eq:strang2});
\item {\bf (TJ6)}: The sixth-order triple jump method (Proposition 2.1 in \cite{castella09smw})  based on Strang's second-order method;
\item {\bf (TJ6A)}: The sixth-order triple jump method  (Proposition 2.2 in \cite{castella09smw}) based on Strang's second-order method;
\item {\bf (TJ8A)}: The eighth-order triple jump method  (Proposition 2.2 in \cite{castella09smw})  based on Strang's second-order method;
\item {\bf (P6S7)}: The sixth-order method (\ref{P6S7});
 \item {\bf (P8S15)}: The eighth-order method (\ref{P8S15}).
\end{itemize}
We compute the error of the numerical solution at time $t=1$ (in
the $2$-norm) as a function of the number of evaluations of the
basic method (the Strang splitting) and represent the outcome in
Figure \ref{fig:reactdiff}. In the left panel we collect the
results achieved by the Strang splitting and the previous
sixth-order composition methods, whereas the right panel
corresponds to eighth-order methods. We have also included, for
reference, the curve obtained by (P6S7).

The relative cost (w.r.t. Strang) of a method composed of $s$ steps is approximated by $4 s$, where the factor $4$ stands here for an average ratio between the cost of complex arithmetic compared to real arithmetic. A remarkable outcome of these experiment is that methods (P6S7) and  (P8S15) outperform Strang's splitting (and actually all other methods tested here) even for low tolerances. Scheme (P8S15), in particular, proves
to be the most efficient in the whole range explored. The gain with respect to triple jump methods is also  significant and completely support the approach followed here.  \\

\begin{figure}[th!]
\begin{center}
\scalebox{0.52}{\includegraphics{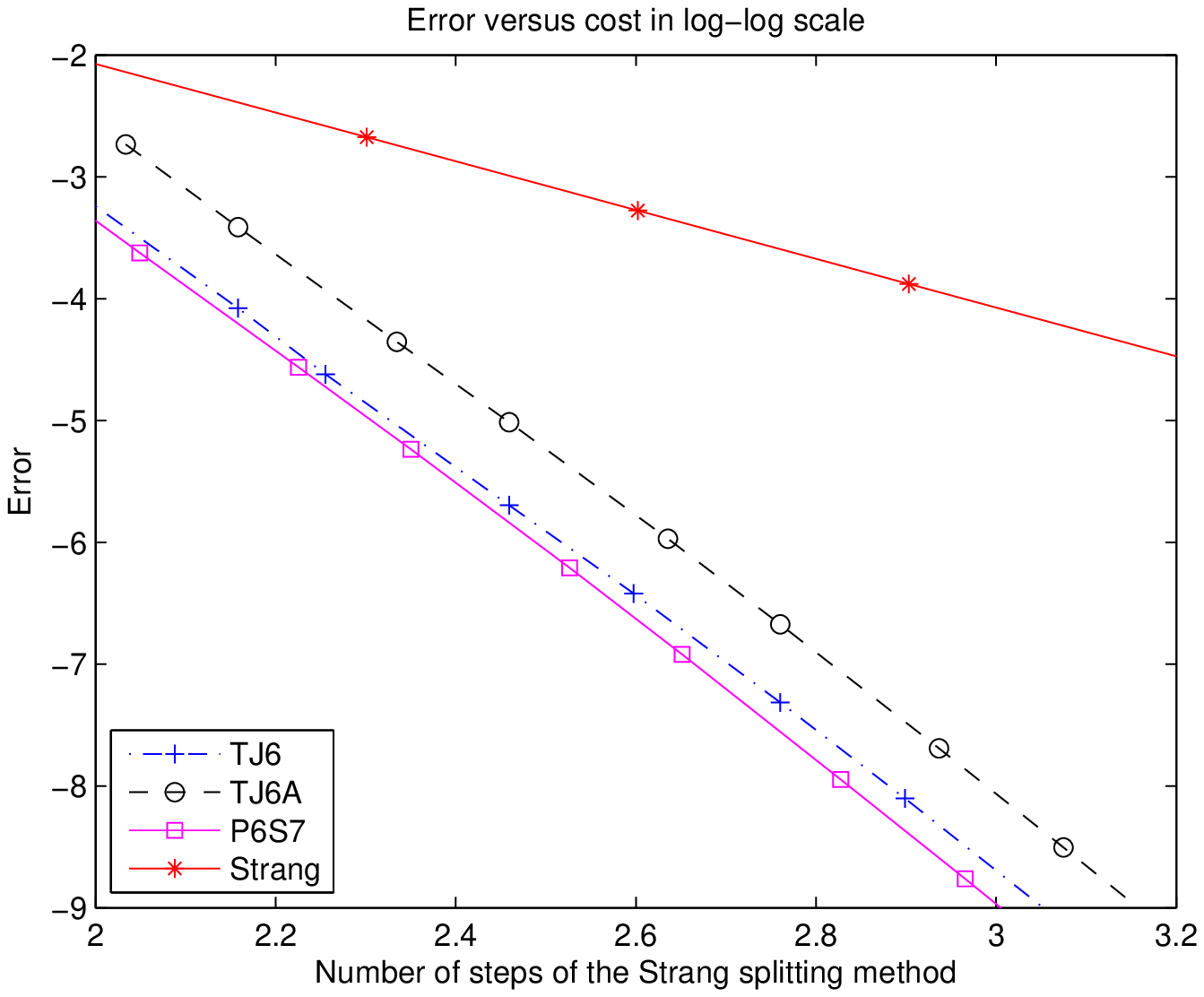}}
\scalebox{0.52}{\includegraphics{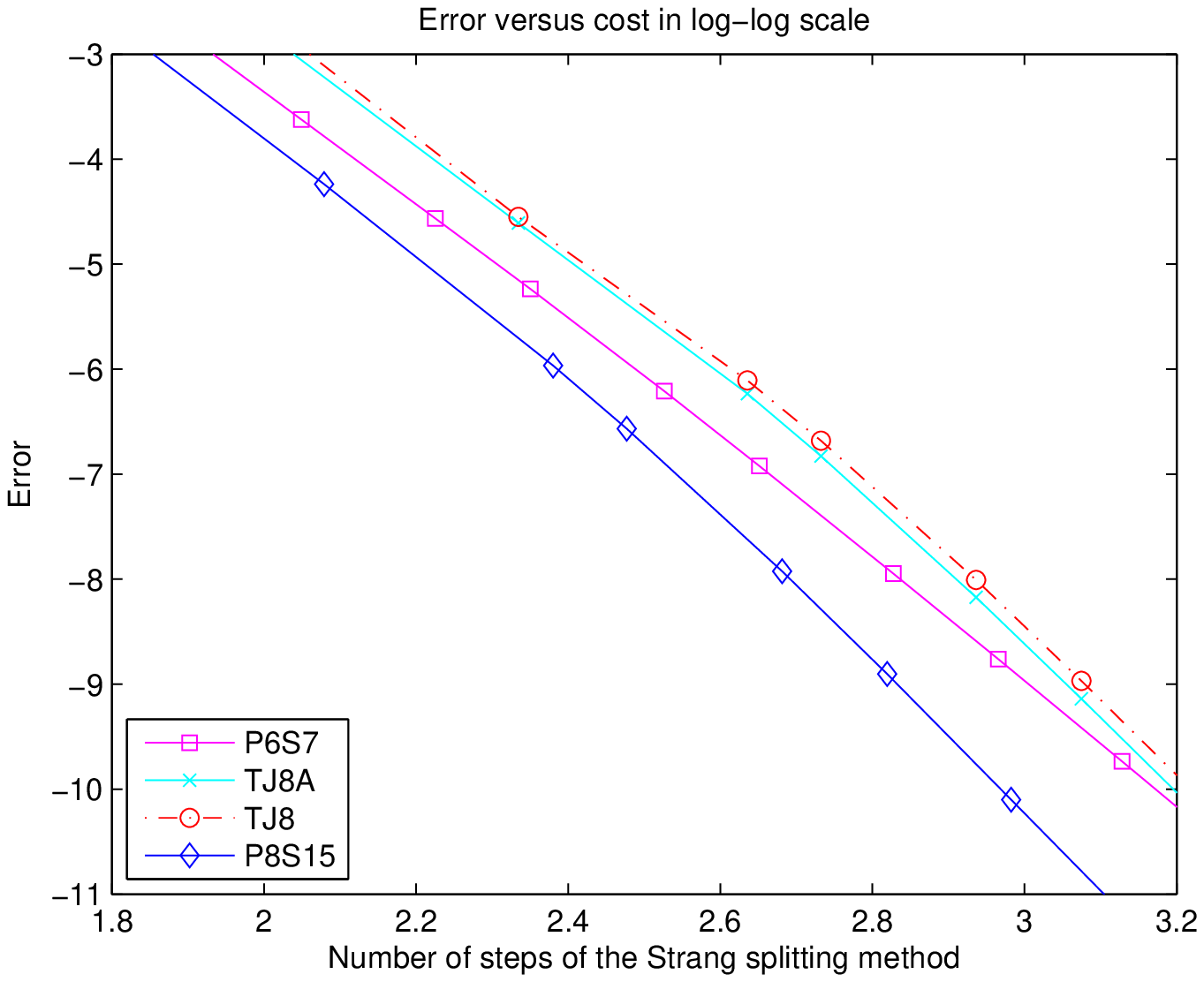}}
\end{center}
\caption{{\small Error versus number of steps for the linear  reaction-diffusion equation (\ref{eq:lrd}).}}
\label{fig:reactdiff}
\end{figure}

\subsection{The semi-linear reaction-diffusion equation of Fisher}
Our second test-problem is  the scalar equation in one-dimension
\begin{eqnarray} \label{eq:nlrd}
\frac{\partial u(x,t)}{\partial t}  = \Delta u(x,t) + F(u(x,t)), \qquad u(x,0) = u_0(x),
\end{eqnarray}
with periodic boundary conditions in the space domain $[0,1]$. We take, in particular,
Fisher's potential $$F(u)=u(1-u).$$
The splitting considered here corresponds to solving, on the one hand,  the linear equation with the
operator $A$ being the Laplacian, and on the other hand, the nonlinear ordinary differential equation
$$
\frac{\partial u(x,t)}{\partial t} = u(x,t)(1-u(x,t))
$$
with initial condition
$$
u(x,0)=u_0(x).
$$
Note that it can be solved analytically as
$$
u(x,t)) = u_0(x)+ u_0(x)(1-u_0(x))\frac{(\e^t-1)}{1+u_0(x) (\e^t-1)},
$$
which is well defined for small complex time $t$. We proceed in the same way as for the previous
linear case, starting with $u_0(x) = \sin(2 \pi x)$.
After discretization in space,
we arrive at the differential equation
\begin{equation} \label{eq:problem1}
\frac{dU}{dt} = A U + F(U),
\end{equation}
where  $U=(u_1, \ldots, u_N) \in \R^N$ and  $F(U)$ is now defined by
$$
F(U)=\big(u_1(1-u_1), \ldots, u_N(1-u_N) \big).
$$
We choose $N=100$ and compute the error (in the $2$-norm) at the
final time $t=1$ by applying the same composition methods as in
the linear case. The results are collected in Figure
\ref{fig:reactdiff-2}, where identical notation has been used.
Notice that, strictly speaking, the theoretical framework upon
which our strategy is based does not cover this nonlinear problem.
Nevertheless, the results achieved are largely similar to the
linear case. In particular, the new 8th-order composition method
is the most efficient even for moderate tolerances.

\begin{figure}[h!]
\begin{center}
\scalebox{0.52}{\includegraphics{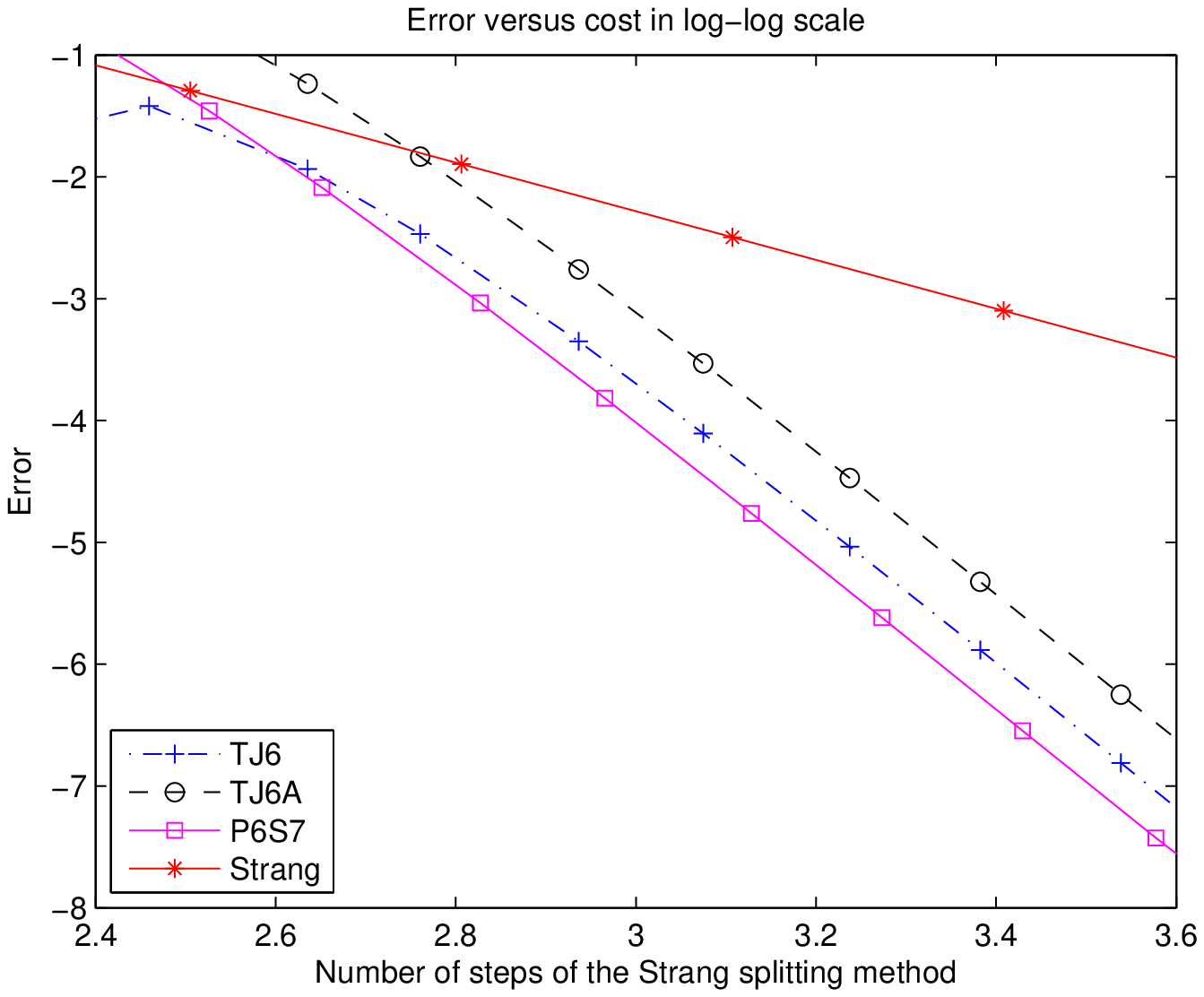}}
\scalebox{0.52}{\includegraphics{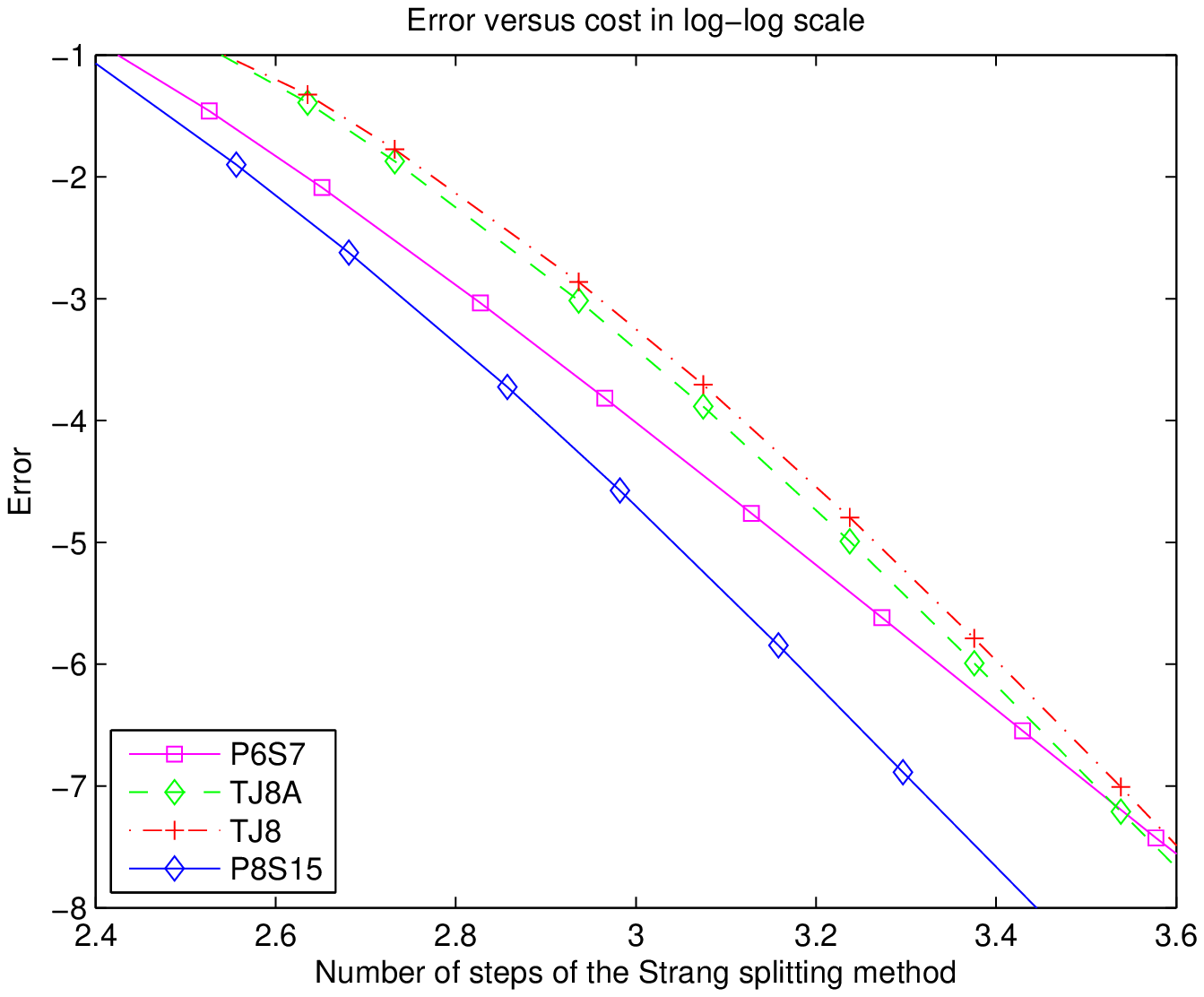}}
\end{center}
\caption{{\small Error versus number of steps for the semi-linear reaction-diffusion equation
(\ref{eq:nlrd}).}}
\label{fig:reactdiff-2}
\end{figure}

\section{Concluding remarks}
\label{sec.6}

Splitting methods with real coefficients for the numerical integration of differential equations of
order higher than two have necessarily some negative coefficients. This feature does not
suppose any special impediment when the differential equation evolves in a group, but may be
unacceptable when it is defined in a semi-group, as is the case with the evolution partial
differential equations considered in this paper. One way to get around this fundamental difficulty
is to consider splitting schemes with complex coefficients having positive real part. This has been recently proposed for diffusion equations in \cite{castella09smw,hansen09hos}. Splitting and composition methods with complex coefficients have been considered in different contexts in the literature (see \cite{blanes10smw} and references therein).

In \cite{castella09smw,hansen09hos}, splitting methods up to order 14 with
complex coefficients with non-negative real part have been recursively constructed either by the so-called triple-jump compositions or by the quadruple-jump compositions, starting from the symmetric second-order Strang splitting.
In this work we  prove that there exists indeed an order barrier of 14 for methods constructed in this way. More generally, we show that no method
of order higher than 14 with coefficients having non-negative real part can be constructed by sequential $s$-jump compositions starting from a symmetric method of order 2. We further show, by explicitly obtaining
methods of order 16 (as the composition of a basic symmetric method of order 8), that this order barrier does not apply for general composition methods (non-necessarily constructed by recursive applications of $s$-jump compositions) with complex coefficients with non-negative real part.

In addition to this order barrier, another drawback of methods
resulting from applying the $s$-jump composition procedure is that
for high orders they require a larger number of stages (i.e. number
of compositions of the basic symmetric second order method) than
methods obtained by directly solving the order conditions with the
minimal number of stages. For instance, methods of order 6
(respectively, 8) obtained with triple jump compositions need 9
(resp. 27) compositions of the basic second order method, whereas,
as we show in the present work, methods of order 6 (resp. 8) can
be constructed (by directly solving for the required order
conditions) with 7 (resp. 15) stages. An analysis of the local
error coefficients supported by numerical tests shows that the
methods proposed here are more efficient than those obtained
in~\cite{castella09smw,hansen09hos} by applying the recursive
triple jump and quadruple jump constructions. An additional
requirement when choosing a given method is that the arguments of
the complex coefficients of the scheme have also to be taken into
account. This constitutes a critical point for evolution equations
where one of the operators (say, $A$) has non-real eigenvalues in
the right-hand side of the complex plane, as occurs, in
particular, with the complex Ginzburg--Landau equation
(\ref{eq:landau}). In such a case, splitting methods of the form
(\ref{eq:splittingmethod}) where one of the two sets of
coefficients $a_i$'s or $b_i$'s is entirely contained in the
positive real axis, whereas the other set is included in the
right-hand side of the complex planes are particularly well
suited. Such splitting methods cannot be constructed as
composition methods with the Strang splitting as basic method, so
that a separate study is required to get the most efficient
schemes within this class.

Based on the theoretical framework worked out in \cite{hansen08esf}, the integrators proposed here can be applied to the numerical integration of linear evolution equations involving unbounded operators in an infinite dimensional space, like linear diffusion equations. As a matter of fact, although the theory developed in
\cite{hansen08esf} does not cover the generalization to semi-linear evolution equations,
 we have also included in our numerical tests a system of ODEs obtained from semi-linear evolution equations with a certain space discretization. All the numerical tests carried out with periodic boundary conditions show a considerable improvement in efficiency of our new methods with respect to existing splitting schemes. 
 A remarkable feature of the
 new eighth-order composition method when applied to both the linear and semi-linear diffusion examples 
 is that it is more efficient than all the other integrators of order $p\leq 8$ in the whole range of tolerances
 explored when periodic boundary conditions are considered.
 
 Concerning other (e.g. Dirichlet and Neumann) boundary conditions, the experiments carried out in  
 \cite{hansen09hos} for linear problems
 with methods obtained by applying the triple and quadruple jump technique show 
 the existence of an order reduction phenomenon. Its origin is attributed by the authors of 
  \cite{hansen09hos} to the fact that Condition 2 in the introduction is not generally satisfied in this setting.
  In other words, terms of the form $E_{p+1} \, \e^{t(A+B)}$ in (\ref{eq:formalLE}) are not uniformly bounded
  on the interval $[0,T]$ for some $T > 0$. As a consequence, 
the classical convergence order is no longer guaranteed in that case. 
This order reduction is also present, of course, 
when the methods introduced in this paper are applied to linear but also semi-linear problems. Nevertheless,
as has been pointed out in  \cite{hansen09hos}, splitting schemes of high order involving complex coefficients
may still be advantageous in comparison with, say, Strang splitting when applied to linear 
parabolic problems with
Dirichlet or Neumann boundary conditions, as they often lead to smaller errors and the order reduction is
somehow confined to a neighborhood of the boundary. The situation, in our opinion, calls for a detailed study of the
effect of boundary conditions other than the periodic case in the global efficiency of splitting methods
with complex coefficients and the analysis of their applicability for more general parabolic problems than those
considered in this paper.

\subsection*{Acknowledgements}

The work of SB, FC and AM has been partially supported by Ministerio de
Ciencia e Innovaci\'on (Spain) under the coordinated project MTM2010-18246-C03
(co-financed by FEDER Funds of the European Union). Financial support from the
``Acci\'on Integrada entre Espa\~na y Francia'' HF2008-0105 is also acknowledged.
AM is additionally funded by project EHU08/43 (Universidad del Pa\'{\i}s Vasco/Euskal
Herriko Unibertsitatea).

\bibliographystyle{alpha}

\end{document}